\renewcommand{\phi}{\varphi}
\newtheorem{Thm}{Theorem}[section]
\newtheorem{thm}[Thm]{Theorem}
\newtheorem{lem}[Thm]{Lemma}
\newtheorem{rem}[Thm]{Remark}
\begin{document}  
\sloppy                             
\title{Weissler and Bernoulli type inequalities in Bergman spaces}

\author[1]{Anton D. Baranov}
\ead{anton.d.baranov@gmail.com}
\author[1,2]{Ilgiz R. Kayumov}
\ead{ikayumov@gmail.com}
\author[2,3]{Diana M. Khammatova\corref{cor1}}
\ead{dianalynx@rambler.ru}
\cortext[cor1]{Corresponding author}
\author[1]{Ramis Sh. Khasyanov}
\ead{hasbendshurich@gmail.com}

\affiliation[1]{organization={St.~Petersburg State University},
	addressline={7-9 Universitetskaya Embankment},
	postcode={199034},
	city={Saint Petersburg},
	country={Russia}}
\affiliation[2]{organization={Kazan Federal University},
	addressline={18 Kremlyovskaya street},
	postcode={420008},
	city={Kazan},	
	country={Russia}}
\affiliation[3]{organization={Moscow Polytechnic University},
	addressline={38 Bolshaya Semyonovskaya street},
	postcode={107023},
	city={Moscow},	
	country={Russia}}



\begin{abstract}
We consider Weissler type inequalities for Bergman spaces with general radial weights
and give conditions on the weight $w$ in terms of its moments
ensuring that $\|f_r\|_{A^{2n}(w)}\leq \|f\|_{A^2(w)}$ whenever $n\in \mathbb{N}$ and $0< r\le 1/\sqrt{n}$.
For noninteger exponents a special case of this inequality is proved 
which can be considered as a certain analog of the Bernoulli inequality.
An example of a monotonic weight is constructed for which these inequalities are no longer true. 
\end{abstract}

\begin{keyword}
	Bergman space \sep Weissler inequality \sep Bernoulli inequality
\end{keyword}

\maketitle

\section{Introduction}
We consider Weissler type inequalities for Bergman spaces 
in the disc with general radial weights. While such questions were studied extensively
for the classical weights, the case of general weights apparently was not previously addressed.
The aim of this note is to propose regularity/convexity conditions on the weight,
expresses in terms of its moments, ensuring that some special cases of the Weissler type inequality hold true.
We show that certain regularity is required since there exist monotonic weights for which
even these special cases of the Weissler type  inequality are false.

To describe our results, let us recall some definitions and classical inequalities. 
A function $f$, analytic in the unit disk $\mathbb{D}$, belongs to the Hardy space  $H^p$, $0<p <\infty$, if
$$
\|f\|_{H^p}:=\sup\limits_{0<r<1}\left(\frac1{2\pi}\int\limits_0^{2\pi}\lvert f(re^{i\theta})\rvert^p\,d\theta\right)^{\frac1p}<\infty.
$$ 		

For $r\in (0,1)$ let $f_r(z) = f(rz)$. 
A well-known result of F.\,B.~Weissler \cite{wei} states that for the Hardy spaces $H^p$ and $H^q$ ($0< p < q$)
$$
\|f_r \|_{H^q}\leq \|f \|_{H^p} \  \ \text{for any}\ f\in H^p 
\qquad \Longleftrightarrow \qquad r\leq \sqrt{\frac pq}\leq 1.
$$

Given a summable nonnegative function $w$ on $[0,1)$, we say that a function $f$, analytic in the unit disk $\mathbb{D}$, 
belongs to the weighted Bergman space $A^p(w)$ if
$$
\|f\|_{A^p(w)} :=\left( \frac1{2\pi}\int\limits_{\mathbb{D}}  \lvert f(z)\rvert^p  w(\lvert z\rvert) \, dA(z)\right)^{1/p} <\infty,
$$
where $A$ denotes the planar Lebesgue measure. We define the moments of the weight $w$ as
$$
h_m = \int\limits_0^1\rho^{m+1}w(\rho)\,d\rho, \qquad m\geq 0.
$$
In what follows we always assume the normalization $h_0=1$.

The weights $w_{\alpha}(r) = 2(\alpha-1)(1-r^2)^{\alpha-2}$, where $\alpha>1$, 
are called classical weights. We will denote Bergman spaces with the weight $w_{\alpha}$ as $A_{\alpha}^p$. 
For the theory of Bergman spaces see, e.g., \cite{hed}.

The classical Carleman inequality
$$
\left(\sum\limits_{n=0}^{\infty}\frac{\lvert a_n\rvert^2}{n+1}\right)^{\frac12} \leq \|f\|_{H^1}
$$
for $f(z) = \sum\limits_{n=0}^{\infty} a_nz^n \in H^1$ 
implies that $\|f\|_{A_2^2}\leq \|f\|_{H^1}$ whence it is easy to deduce that
$$
 \|f\|_{A_2^{2p}}\leq \|f\|_{H^p},\qquad 0<p<\infty.
$$
The following generalization of the Carleman inequality was proved by J.~Burbea \cite{bur}. 
Let $k\in \mathbb{N}$, $k\ge 2$, and $p=\frac2k$. 
Then, for  $f(z) = \sum\limits_{n=0}^{\infty}a_nz^n\in H^p$,
\begin{equation}
\label{hypKul}
\|f\|_{A_{2/p}^2} = \left(\sum\limits_{n=0}^{\infty}\frac{\lvert a_n\rvert^2}{c_{2/p}(n)}\right)^{\frac12} \leq \|f\|_{H^p},
\qquad c_{\beta}(n) = C_{n+\beta-1}^n.
\end{equation}


It was conjectured in \cite{bre1} that inequality \eqref{hypKul} holds for all $0<p\leq 2$.  	
An inequality similar to \eqref{hypKul}, but with a constant slightly worse than 1 on the right,
was proved in \cite{bre1, lli}. 

Contractive inequalities for Bergman spaces $A_{\alpha}^p$ 
were also studied by many authors. In particular, 
F.~Bayart, O.\,F.~Brevig, A.~Haimi, J.~Ortega-Cerd\`a and K.-M.~Perfekt \cite{bre2} proved the following
counterpart of the Weissler inequality. Let $0 < p\leq q <\infty$ and $\alpha = \frac{n+1}2$ 
for some $n\in\mathbb{N}$. Then for $f\in A_{\alpha}^p$
\begin{equation}
\label{wb}
	\|f_r\|_{A_{\alpha}^q}\leq \|f\|_{A_{\alpha}^p}
	\  \  \text{for any}\ f\in A_{\alpha}^p
	\quad \Longleftrightarrow \quad r\leq \sqrt{\frac pq}\leq 1.
\end{equation}

A remarkable progress in these problems was achieved in 2022 (after the first version of the present note was finished).
A.\,Kulikov \cite{kul} proved the conjectured inequality  \eqref{hypKul} for all $0<p\le 2$, he also showed
that it is sharp in every coefficient. Using the results and methods of the paper by Kulikov,
P. Melentijevi\'c \cite{mel} proved that \eqref{wb} is true for any $0 < p < q < \infty$ such that $q\ge 2$ and 
for any $\alpha>1$. Moreover, \eqref{wb} holds for $q<2$ as well if we assume that $f$ is zero-free, while in general
only a lower bound for $r$ was found in this case.

In the present note we are interested in analogs of \eqref{wb} for Bergman spaces 
with general radial weights. Our goal is to find conditions on the weight 
for which a Weissler type inequality is true. We give such conditions in terms of the moments  $h_{2m}$ of the weight $w$. 
While our present results deal with very special situations (even integer exponents or a specific choice of 
a function) we conjecture that under these or similar conditions on the weight
the results can be extended to the case of general exponents. 

Note that  by the Cauchy inequality, one always has $h^2_{2m} \le h_{2(m-1)} h_{2(m+1)}$. 
Our condition in the next theorem is a converse (in a sense) inequality.

\begin{thm}\label{even}
	Let $w$ be a Bergman weight satisfying
	\begin{equation}\label{weakcond}
		\frac{h_{2m}}{h_{2(m-1)}}\geq \frac m{m+1}\frac{h_{2(m+1)}}{h_{2m}}
	\end{equation}
	all $m\geq 1$.  Then for any $n\in\mathbb{N}$ we have 
	\begin{equation}\label{conc}
		\|f_r\|_{A^{2n}(w)}\leq \|f\|_{A^2(w)} 
		\  \ \text{for any}\ f\in A^2(w)
		\quad \Longleftrightarrow \quad r\leq \frac{1}{\sqrt{n}} \leq 1.
	\end{equation}
\end{thm}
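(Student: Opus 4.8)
The plan is to pass to Taylor coefficients and reduce everything to a moment inequality. Writing $f(z)=\sum_{k\ge 0}a_kz^k$ and integrating in polar coordinates, one gets $\|f\|_{A^2(w)}^2=\sum_k|a_k|^2h_{2k}$; and since $2n$ is even, $|f_r(z)|^{2n}=|f_r(z)^n|^2$, so if $d_k=\sum_{j_1+\dots+j_n=k}a_{j_1}\cdots a_{j_n}$ denotes the $k$-th coefficient of $f^n$, then $f(rz)^n$ has coefficients $r^kd_k$ and $\|f_r\|_{A^{2n}(w)}^{2n}=\sum_k r^{2k}|d_k|^2h_{2k}$. Raising \eqref{conc} to the power $2n$, the claim becomes $\sum_k r^{2k}|d_k|^2h_{2k}\le\big(\sum_k|a_k|^2h_{2k}\big)^n$. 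The left-hand side is nondecreasing in $r$, and since $|d_k|\le\sum_{j_1+\dots+j_n=k}|a_{j_1}|\cdots|a_{j_n}|$ while passing from $a_k$ to $|a_k|$ leaves $\|f\|_{A^2(w)}$ unchanged, it suffices to prove the inequality for $r=1/\sqrt n$ and for nonnegative coefficients $a_k\ge 0$ (and, after a routine density/Fatou step, for polynomials).

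For the necessity of $r\le 1/\sqrt n$ I would test $f(z)=1+cz$ with $c\to 0^+$, where $d_0=1$ and $d_1=nc$. The inequality then reads $1+r^2n^2c^2h_2+O(c^4)\le(1+c^2h_2)^n=1+nc^2h_2+O(c^4)$, which forces $r^2n^2\le n$, i.e. $r\le 1/\sqrt n$; the condition $1/\sqrt n\le 1$ is automatic.

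Write $H_k=h_{2k}$ and $S_k=\sum_{j_1+\dots+j_n=k}\prod_iH_{j_i}^{-1}$. The heart of the sufficiency is one Cauchy--Schwarz estimate: splitting $a_{j_1}\cdots a_{j_n}=\big(\prod_ia_{j_i}H_{j_i}^{1/2}\big)\big(\prod_iH_{j_i}^{-1/2}\big)$ and summing over all $\vec j$ with $j_1+\dots+j_n=k$ gives $d_k^2\le S_k\sum_{j_1+\dots+j_n=k}\prod_ia_{j_i}^2H_{j_i}$. Inserting this into the left-hand side with weights $n^{-k}H_k$, the whole inequality follows from the single pointwise bound $n^{-k}H_kS_k\le 1$, because then the nested sum telescopes to $\sum_{\vec j}\prod_ia_{j_i}^2H_{j_i}=\big(\sum_ja_j^2H_j\big)^n$. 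Hence everything reduces to the purely moment-theoretic estimate $H_kS_k\le n^k$.

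This is the step where hypothesis \eqref{weakcond} must be used, and I expect the decisive observation to be that \eqref{weakcond} is exactly the log-convexity of $\gamma_m:=m!/h_{2m}$: clearing factorials in \eqref{weakcond} rewrites it as $\gamma_{m-1}\gamma_{m+1}\ge\gamma_m^2$. Since $\gamma_0=1$ by the normalization $h_0=1$, the ratios $\gamma_m/\gamma_{m-1}$ are nondecreasing, so $\gamma$ is supermultiplicative, $\gamma_a\gamma_b\le\gamma_{a+b}$, and therefore $\gamma_{j_1}\cdots\gamma_{j_n}\le\gamma_{j_1+\dots+j_n}=\gamma_k$ whenever $j_1+\dots+j_n=k$. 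Using $H_j^{-1}=\gamma_j/j!$ and pulling out $1/k!$, the multinomial theorem then yields $k!\,S_k=\sum_{j_1+\dots+j_n=k}\binom{k}{j_1,\dots,j_n}\gamma_{j_1}\cdots\gamma_{j_n}\le\gamma_k\sum_{j_1+\dots+j_n=k}\binom{k}{j_1,\dots,j_n}=\gamma_kn^k$, i.e. $S_k\le n^k\gamma_k/k!=n^k/H_k$, which is precisely $H_kS_k\le n^k$. The main obstacle is finding the right reading of the weight condition; once it is recognized as log-convexity of $m!/h_{2m}$, the supermultiplicativity/multinomial mechanism locks together with the Cauchy--Schwarz weighting and the proof closes with no further effort.
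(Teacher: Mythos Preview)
Your proof is correct and follows essentially the same route as the paper: Parseval, the same Cauchy--Schwarz splitting with weights $h_{2j_i}^{1/2}$, the reduction to $\sum_{j_1+\dots+j_n=k}\prod_i h_{2j_i}^{-1}\le n^k/h_{2k}$, the multinomial sum, and the test function $1+\varepsilon z$ for sharpness. The one cosmetic difference is that the paper derives the key product bound $h_{2j_1}\cdots h_{2j_n}\ge\frac{j_1!\cdots j_n!}{k!}h_{2k}$ by iterated pairwise shifts $h_{2p}h_{2q}\ge\frac{p}{q+1}h_{2(p-1)}h_{2(q+1)}$, whereas you obtain it more cleanly by reading \eqref{weakcond} as log-convexity of $\gamma_m=m!/h_{2m}$ and invoking the resulting supermultiplicativity; these are two phrasings of the same mechanism.
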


It is easy to see that \eqref{weakcond} holds for all classical weights $w_\alpha$.

We now turn to Weissler type inequalities between $A^2(w)$ and $A^{2q}(w)$ where $q>1$ 
is an arbitrary real number. This problem is much more complex, because now one cannot use combinatorics. 
In fact, a natural conjecture arises that the inequality $\|f_r\|_{A^{2q}(w)}\leq \|f\|_{A^2(w)}$ holds for 
$r=\frac{1}{\sqrt{q}}$. In other words,
\begin{equation}
	\label{ram}
	\frac1{2\pi}\int\limits_0^1\int\limits_0^{2\pi}\left\lvert f^{q}\left(\frac{\rho e^{i\theta}}{\sqrt{q}}\right)\right\rvert^2
	\rho w(\rho)\, d\theta\,d\rho \leq \left(\frac1{2\pi}\int\limits_0^1\int\limits_0^{2\pi}\lvert f(\rho e^{i\theta})\rvert^2\rho w(\rho)\, d\theta\,d\rho\right)^{q}.
\end{equation}

To have the possibility of taking the powers we consider functions nonvanishing in $\mathbb{D}$. 
They can be represented as $f(z) = e^{\phi(z)}$. Let $\phi(z) =\sum_{n=0}^{\infty}a_k z^k$. 
Since the norms in $A_{\alpha}^n$ are expressed in terms of modulus of coefficients, 
we can assume that all $a_k\geq0$. For such functions
$$
	f^{q}\left(z\right) = e^{q \phi(z)} =  \sum_{n=0}^{\infty}\frac{q^n}{n!}\left(\sum_{k=0}^{\infty}a_k z^k\right)^n 
	= \sum_{n=0}^{\infty}\frac{q^n}{n!}\sum_{k=0}^{\infty}\sum_{j_1+\ldots+j_n=k}a_{j_1}\cdot\ldots\cdot a_{j_n} z^k.
$$
Changing the order of summation and substituting the value of the argument, we have
$$
f^{q}\left(\frac{\rho e^{i\theta}}{\sqrt{q}}\right) = \sum_{n=0}^{\infty}\left(\sum_{k=0}^{\infty} 
\frac{q^k}{k!}\sum_{j_1+\ldots+j_k=n}a_{j_1}\cdot\ldots\cdot a_{j_k}\right) \frac{ \rho^ne^{i\theta n}}{q^{n/2}}.
$$

We introduce the following notation.
$$
g_{nk} = \frac1{k!}\sum_{j_1+\ldots+j_k=n}a_{j_1}\cdot\ldots\cdot a_{j_k}.
$$
Then the left-hand side of \eqref{ram} can be represented as
$$
\frac1{2\pi}\int\limits_0^1\int\limits_0^{2\pi}\left\lvert f^{q}\left(\frac{\rho e^{i\theta}}{\sqrt{q}}\right)\right\rvert^2\rho w(\rho)\, d\theta\,d\rho =
\sum_{n=0}^{\infty}\frac1{q^n}\left(\sum_{k=0}^{\infty} q^k g_{nk}\right)^2 h_{2n}.
$$
Similarly, for the right-hand side of \eqref{ram} we have
$$
\left(\frac1{2\pi}\int\limits_0^1\int\limits_0^{2\pi}\lvert f(\rho e^{i\theta})\rvert^2\rho w(\rho)\, d\theta\,d\rho\right)^{q} = \left(\sum_{n=0}^{\infty}\left(\sum_{k=0}^{\infty} g_{nk}\right)^2 h_{2n}\right)^q.
$$
Therefore, the desired inequality takes the following form:
$$
\sum_{n=0}^{\infty}\frac1{q^n}\left(\sum_{k=0}^{\infty} q^k g_{nk}\right)^2 h_{2n} \leq \left(\sum_{n=0}^{\infty}\left(\sum_{k=0}^{\infty} g_{nk}\right)^2 h_{2n}\right)^q.
$$

This inequality looks, in general, quite complicated,
so it seems logical to consider some particular cases. For example, one can take the function $f(z) = e^z$. In this case $g_{nk} = \frac{\delta_{nk}}{k!}$ and the desired inequality is
\begin{equation}
	\label{bern}
	\sum\limits_{n=0}^{\infty}\frac{q^n}{(n!)^2}h_{2n}\leq \left(\sum\limits_{n=0}^{\infty}\frac{h_{2n}}{(n!)^2}\right)^q.
\end{equation}
Although this is only a particular case, this inequality seems to be of independent interest  because it 
can be considered as an analog of the classical Bernoulli inequality for the moment sequences. It should be 
emphasized that it is essential here that $h_{2n}$ is a sequence of moments of some weight and also
that this inequality need not be true for an arbitrary (even monotonic) weight.

The following theorem gives a sufficient condition for the inequality \eqref{bern}. We have to replace  condition
\eqref{weakcond} from Theorem~\ref{even} by a stronger condition on the moments.

\begin{thm}\label{mainth}
	Let $w $ be a Bergman weight. If the inequality
	\begin{equation}\label{cond}
		\frac{h_{2m}}{h_{2(m-1)}}\geq \frac{h_{2(m+1)}}{(m+1)h_{2(m-1)}}+\frac m{m+1}\frac{h_{2(m+1)}}{h_{2m}}
	\end{equation}
	holds for all $m\geq 1$, then
	\begin{equation}\label{concl}
		\sum\limits_{n=0}^{\infty}\frac{q^n}{(n!)^2}h_{2n}\leq \left(\sum\limits_{n=0}^{\infty}\frac{h_{2n}}{(n!)^2}\right)^q,\qquad q\geq1.
	\end{equation}
\end{thm}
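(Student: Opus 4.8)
The plan is to recast \eqref{concl} as a statement about the single entire function
$$
S(q)=\sum_{n=0}^{\infty}b_nq^n,\qquad b_n=\frac{h_{2n}}{(n!)^2},
$$
so that, with $A=S(1)$, the inequality \eqref{concl} reads $S(q)\le S(1)^q$ for $q\ge1$. Since $b_0=h_0=1$ and all $b_n\ge0$, the series converges for every $q$, $S>0$ on $[0,\infty)$, and $g(q):=\ln S(q)$ satisfies $g(0)=0$. The key reduction is that it suffices to prove that $g$ is concave on $[0,\infty)$. Indeed, set $\eta(q)=qg'(q)-g(q)$; then $\eta(0)=-g(0)=0$ and $\eta'(q)=qg''(q)\le0$, so $\eta\le0$ on $[0,\infty)$, which is exactly $\bigl(g(q)/q\bigr)'\le0$. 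Hence $g(q)/q$ is nonincreasing, and for $q\ge1$ we obtain $g(q)/q\le g(1)=\ln A$, i.e. $\ln S(q)\le q\ln A$; exponentiating gives \eqref{concl}.

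It then remains to prove concavity, i.e. $S(q)S''(q)\le S'(q)^2$ for $q\ge0$. Expanding both sides as power series in $q$, a direct computation gives
$$
S'(q)^2-S(q)S''(q)=\sum_{k=2}^{\infty}C_k\,q^{k-2},\qquad
C_k=\frac12\sum_{a+b=k}b_ab_b\bigl(k-(a-b)^2\bigr)
$$
(the formula yields $C_0=C_1=0$, so no negative powers appear). Because $q\ge0$, it is enough to establish the coefficientwise inequalities $C_k\ge0$ for all $k\ge2$. In $C_k$ the terms with $|a-b|<\sqrt k$, clustered near $a=b=k/2$, carry a positive weight, whereas the ``extreme'' terms $b_0b_k,\ b_1b_{k-1},\dots$ carry negative weights, so $C_k\ge0$ asserts that the central block dominates the tails.

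This domination is where condition \eqref{cond} enters, and it is the main obstacle. Its role is clarified by introducing the ratios $d_m=mb_m/b_{m-1}$: a short computation turns \eqref{cond} into $d_{m+1}(1+d_m)\le d_m$, equivalently $1/d_{m+1}\ge1+1/d_m$. Telescoping from $d_1=b_1=h_2$ yields the explicit decay
$$
\frac{b_m}{b_{m-1}}\le\frac{h_2}{m\bigl(1+(m-1)h_2\bigr)},\qquad m\ge1,
$$
which forces each product $b_ab_{k-a}$ to be strongly unimodal in $a$. The plan is then to prove $C_k\ge0$ by induction on $k$: one uses \eqref{cond} at level $m=k-1$ to bound the single new coefficient $b_k$ against $b_{k-1},b_{k-2}$, the lower instances of \eqref{cond} to control the remaining consecutive ratios, and thereby reduces $C_k\ge0$ to an elementary polynomial inequality in the ratios $b_m/b_{m-1}$. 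For small $k$ this is immediate (e.g. $C_2\ge0$ is $b_1^2\ge2b_2$, $C_4\ge0$ is $b_2^2\ge6b_4$), but the difficulty grows with $k$: from $k=5$ onward several negative tail terms occur simultaneously, and all of them must be absorbed into the central terms by means of the decay estimate above. Making this bookkeeping uniform in $k$ is the heart of the argument.

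As a consistency check motivating the whole scheme, recall that for the classical weights $w_\alpha$ one computes $h_{2n}=n!/(\alpha)_n$, where $(\alpha)_n=\alpha(\alpha+1)\cdots(\alpha+n-1)$, whence $b_n=1/\bigl(n!\,(\alpha)_n\bigr)$. Thus $S(q)=\sum_n q^n/\bigl(n!\,(\alpha)_n\bigr)$ is, up to the factor $q^{-(\alpha-1)/2}$ and a constant, the modified Bessel function $I_{\alpha-1}(2\sqrt q)$; all its zeros are negative reals, so $\ln S$ is automatically concave on $[0,\infty)$. Theorem~\ref{mainth} asserts that the weaker, purely moment-theoretic hypothesis \eqref{cond} already forces the same concavity, and hence the Bernoulli type inequality \eqref{concl}.
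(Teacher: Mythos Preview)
Your reduction is both correct and cleaner than the paper's. You observe that $g(q)=\ln S(q)$ satisfies $g(0)=0$, so concavity of $g$ on $[0,\infty)$ alone forces $g(q)/q$ to be nonincreasing and hence $S(q)\le S(1)^q$ for $q\ge1$. The paper instead works with $\phi(q)=g(q)-qg(1)$ only on $[1,\infty)$: after establishing $\phi''\le0$ (its Lemma~\ref{lem1}), it still needs a separate and rather laborious estimate (Lemma~\ref{lem2}, which is where the full strength of \eqref{cond} is invoked, together with a Bessel-function computation and a further calculus argument) to show $\phi'(1)\le0$. Your observation renders Lemma~\ref{lem2} unnecessary, and since the paper's Lemma~\ref{lem1} establishes the concavity using only the weaker hypothesis \eqref{weakcond}, your route in fact yields \eqref{concl} under \eqref{weakcond} alone --- a strengthening worth recording.

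That said, your proposal has a genuine gap: the core inequality $C_k\ge0$ is never proved. You outline a strategy based on the decay estimate $b_m/b_{m-1}\le h_2/\bigl(m(1+(m-1)h_2)\bigr)$, note that the small cases are immediate, and then concede that ``making this bookkeeping uniform in $k$ is the heart of the argument'' --- but this heart is missing. The paper's Lemma~\ref{lem1} is exactly your $C_k\ge0$ (after the change of variables $k=n+2$ it becomes $\sum_{a+b=k}b_ab_b\bigl(k-(a-b)^2\bigr)\ge0$), and its proof is the non-obvious step. It does not proceed by induction on $k$ or by a crude use of the decay bound; rather, it is a telescoping/pairing argument. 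One splits by the parity of $k$, introduces auxiliary sequences $s_j,t_j$ indexed by the distance from the center $a=b=k/2$, verifies an explicit closed form for $s_j$ by induction, and uses the two-sided consequence
\[
\frac{p}{q+1}\,g_{p-1}g_{q+1}\le g_pg_q\le\frac{q}{p+1}\,g_{p+1}g_{q-1}\qquad(p<q)
\]
of \eqref{weakcond} at each step to show that the positive central contributions absorb the negative tail terms one at a time. This is precisely the ``bookkeeping uniform in $k$'' that your write-up promises but does not deliver; supplying it (or an alternative argument) is what remains to complete your proof.
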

	
	In particular, the estimate \eqref{cond} holds for all standard Bergman weights $w_{\alpha}$;
	for them it turns into equality. Thus, it looks plausible that the weights satisfying \eqref{cond} 
	are a correct class for generalizations of \eqref{wb} to general weights. 

However, \eqref{conc} and \eqref{concl} do not  hold for arbitrary weights and we indeed 
need to impose some regularity conditions. 

\begin{thm}\label{counter}
	There exists a monotonically decreasing weight $w$ such that the inequality 
	$$
	\|f_r\|_{A^{2q}(w)}\leq \|f\|_{A^2(w)}
	$$
	does not hold for $f(z) = e^z$, $r=\frac{1}{\sqrt{q}}$ and $q=2$ as well as for $q\in (1, 1+\varepsilon)$ 
	for some $\varepsilon >0$.  
\end{thm}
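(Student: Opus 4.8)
The plan is to reduce \eqref{bern} to a one–variable convexity question and then engineer the moment sequence directly. After the substitution $x=\rho^2$ the even moments become $h_{2n}=\int_0^1 x^n\,d\mu(x)$, where $d\mu(x)=\frac12 w(\sqrt x)\,dx$ is a probability measure on $[0,1]$ whose density is nonincreasing precisely when $w$ is nonincreasing. Writing $\Phi(q)=\sum_{n\ge0}\frac{q^n}{(n!)^2}h_{2n}$ and $S=\Phi(1)$, the inequality \eqref{concl} reads $\Phi(q)\le S^q$, and since $\Phi(1)=S$ there is equality at $q=1$. Hence failure of \eqref{concl} just to the right of $q=1$ is governed by the single number
\[
\psi'(1)=\frac{\Phi'(1)}{S}-\log S=\frac{\sum_{n\ge0}\frac{n}{(n!)^2}h_{2n}}{\sum_{n\ge0}\frac{1}{(n!)^2}h_{2n}}-\log\!\Big(\sum_{n\ge0}\frac{h_{2n}}{(n!)^2}\Big),
\]
where $\psi(q)=\log\Phi(q)-q\log S$: if $\psi'(1)>0$ then $\psi>0$, i.e.\ \eqref{concl} fails, on some interval $(1,1+\vep)$.

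To build the weight I would use the extremal description of nonincreasing densities: every such density is a mixture $\int_0^1 \frac1c\mathbf{1}_{[0,c]}\,d\nu(c)$ of the uniform densities on $[0,c]$, so that $h_{2n}=\frac1{n+1}\int_0^1 c^n\,d\nu(c)$ for a probability measure $\nu$ on $[0,1]$. Introducing $J(x)=\sum_{n\ge0}\frac{x^n}{n!(n+1)!}$ (a modified Bessel function, $J(x)=I_1(2\sqrt x)/\sqrt x$), one gets the compact formulas $\Phi(q)=\int_0^1 J(qc)\,d\nu(c)$, $S=\int_0^1 J(c)\,d\nu(c)$, $\Phi'(1)=\int_0^1 cJ'(c)\,d\nu(c)$, all weak-$*$ continuous in $\nu$. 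The candidate is $\nu$ close to $(1-p)\delta_0+p\,\delta_1$ with a small $p>0$; concretely this is the two-step weight that equals a large constant on a short disc $\{|z|\le\sqrt\vep\}$ and a small constant on the rest of $\D$, which is manifestly nonincreasing.

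For the exact endpoint $\nu=(1-p)\delta_0+p\,\delta_1$ the formulas collapse to $\Phi(q)=1+p\big(J(q)-1\big)$ and $S=1+p\big(J(1)-1\big)$. I would then verify two strict inequalities. First, expanding to first order in $p$ gives $\psi'(1)=p\big(J'(1)-(J(1)-1)\big)+O(p^2)$, and since $J'(1)=I_2(2)>J(1)-1=I_1(2)-1$ the coefficient is positive, so $\psi'(1)>0$ for all small $p>0$, yielding failure on $(1,1+\vep)$. Second, at $q=2$ the inequality $\Phi(2)>S^2$ becomes $p\big(J(2)-1\big)>2p\big(J(1)-1\big)+p^2\big(J(1)-1\big)^2$, which holds for all $p$ below an explicit positive threshold because $J(2)-1>2\big(J(1)-1\big)$; as the two admissible ranges of $p$ overlap, a single small $p$ makes \eqref{bern} fail both at $q=2$ and throughout some $(1,1+\vep)$.

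The one genuine obstacle is that the clean endpoint measure $(1-p)\delta_0+p\,\delta_1$ corresponds to a point mass of $\mu$ at the origin, i.e.\ not to an honest density, so it is not literally a weight. I would remove this by smearing $\delta_0$ into the uniform density on $[0,\vep]$, i.e.\ taking $\nu=(1-p)\delta_\vep+p\,\delta_1$, which is exactly the two-step weight above, and for which the three functionals $\Phi(2),S,\Phi'(1)$ differ from their endpoint values by $O(\vep)$. Since the inequalities $\psi'(1)>0$ and $\psi(2)>0$ established at the endpoint are strict, they persist for $\vep$ small, producing a bona fide nonincreasing weight (which can be made strictly decreasing by a further arbitrarily small perturbation) for which \eqref{bern} fails at $q=2$ and throughout some $(1,1+\vep)$, exactly as required.
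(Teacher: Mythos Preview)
Your argument is correct and takes a genuinely different route from the paper.  The paper simply exhibits the explicit weight
\[
w^*(\rho)=\begin{cases}\dfrac{3}{2\rho},&0\le\rho\le\tfrac12,\\[4pt]\dfrac{1}{2\rho},&\tfrac12<\rho\le1,\end{cases}
\]
computes its moments $h_n=\frac{1+2^{-n}}{2(1+n)}$, and then checks \emph{numerically} that $\psi'(1)\approx0.0048>0$ and $\psi(2)\approx0.0105>0$; the monotonicity is visible by inspection.  Your approach instead passes to the variable $x=\rho^2$, uses the Khinchin--Choquet representation of nonincreasing densities as mixtures of uniforms, and reduces everything to two strict inequalities for the entire function $J(x)=\sum_{n\ge0}\frac{x^n}{n!(n+1)!}$, namely $J'(1)>J(1)-1$ and $J(2)-2J(1)+J(0)>0$.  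Both of these are in fact immediate from the power series (the first equals $\sum_{n\ge2}\frac{n-1}{n!(n+1)!}>0$, the second equals $\sum_{n\ge2}\frac{2^n-2}{n!(n+1)!}>0$), so your argument is fully analytic rather than numerical, and it produces a one-parameter family of two-step weights $w=\frac{2(1-p)}{\vep}\mathbf{1}_{[0,\sqrt\vep]}+2p$ that all serve as counterexamples for small $p$ and $\vep$.  The trade-off is that the paper's proof is a three-line computation once the weight is written down, whereas your route requires the Choquet representation and a weak-$*$ continuity step to pass from the degenerate endpoint $(1-p)\delta_0+p\delta_1$ to an honest weight; on the other hand, your argument explains \emph{why} counterexamples exist (strict convexity of $J$) and needs no computer verification.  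One cosmetic point: the theorem asks for a \emph{monotonically decreasing} weight, and your two-step weight is only nonincreasing; your final remark about a small strictly-decreasing perturbation handles this, and you should make that perturbation explicit (e.g.\ add $\delta(1-\rho)$ for tiny $\delta>0$) when writing the proof out in full.
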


The paper is organized as follows. In Section 2 we prove Theorem~\ref{even} which gives the counterpart 
of Weissler inequality for Bergman spaces when $q$ is even and $p=2$. In Section 3 two auxiliary lemmas are proved, while
Section 4 is devoted to the proof of  Theorems~\ref{mainth} and \ref{counter}.
\bigskip


\section{Proof of Theorem~\ref{even}}

We need to prove the following inequality.
\begin{equation}\label{bb}
	\frac1{2\pi}\int\limits_0^1\int\limits_0^{2\pi}\left\lvert f^n\left(\frac{\rho e^{i\theta}}{\sqrt{n}}\right)\right\rvert^2\rho w(\rho)\, d\theta\,d\rho \leq \left(\frac1{2\pi}\int\limits_0^1\int\limits_0^{2\pi}\lvert f(\rho e^{i\theta})\rvert^2\rho w(\rho)\, d\theta\,d\rho\right)^n.
\end{equation}

Let $f(z) = \sum\limits_{k=0}^{\infty}a_kz^k$. Then $f^n(z)$ can be represented as
$$
f^n(z) = \sum\limits_{k=0}^{\infty}\left(\sum\limits_{j_1+\ldots+j_n=k}a_{j_1}\cdot\ldots\cdot a_{j_n}\right)z^k.
$$
The integrals in \eqref{bb} can be calculated using Parseval identity. Thus, the left-hand side of \eqref{bb} is
$$
	\int\limits_0^1\rho w(\rho)\sum\limits_{k=0}^{\infty} \left(\sum\limits_{j_1+\ldots+j_n=k}a_{j_1}\cdot\ldots\cdot a_{j_n}\right)^2\frac{\rho^{2k}}{n^k}\,d\rho = \sum\limits_{k=0}^{\infty}\frac1{n^k}\left(\sum\limits_{j_1+\ldots+j_n=k}a_{j_1}\cdot\ldots\cdot a_{j_n}\right)^2h_{2k}
$$
and the right-hand side of \eqref{bb} is
\begin{multline*}
	\left(\int\limits_0^1\rho w(\rho)\sum\limits_{k=0}^{\infty}a_k^2 \rho^{2k}\,d\rho\right)^n =  \left(\sum\limits_{k=0}^{\infty}a_k^2\int\limits_0^1\rho^{2k+1} w(\rho)\,d\rho\right)^n = \\ = \sum\limits_{k=0}^{\infty} \sum\limits_{j_1+\ldots+j_n=k}a_{j_1}^2\cdot\ldots\cdot a_{j_n}^2 h_{2j_1}\cdot\ldots\cdot h_{2j_n}.
\end{multline*}	
From the Cauchy inequality
\begin{multline*}
	\sum\limits_{k=0}^{\infty}\frac{h_{2k}}{n^k}\left(\sum\limits_{j_1+\ldots+j_n=k}a_{j_1}\cdot\ldots\cdot a_{j_n}
	\right)^2\leq \\ 
	\leq \sum\limits_{k=0}^{\infty}\frac{h_{2k}}{n^k}\sum\limits_{j_1+\ldots+j_n=k}a_{j_1}^2\cdot\ldots\cdot a_{j_n}^2 h_{2j_1}\cdot\ldots\cdot h_{2j_n} \cdot \sum\limits_{j_1+\ldots+j_n=k} \frac1{h_{2j_1}\cdot\ldots\cdot h_{2j_n}}.
\end{multline*}
Now it is sufficient to prove that
$$
\sum\limits_{j_1+\ldots+j_n=k} \frac1{h_{2j_1}\cdot\ldots\cdot h_{2j_n}} \leq \frac{n^k}{h_{2k}},
$$
and then the desired inequality will hold term by term.

Inequality~\eqref{weakcond} implies that for $m\ge n$ 
$$
h_{2n} h_{2m} \geq \frac n{m+1}h_{2(n-1)}h_{2(m+1)}.
$$

We need to estimate the product $h_{2j_1}\cdot\ldots\cdot h_{2j_n}$, where $j_1+\ldots+j_n=k$. 
Without loss of generality we assume that $j_1\leq j_2$. Then
\begin{multline*}
	h_{2j_1} h_{2j_2}\geq \frac{j_1}{j_2+1}h_{2(j_1-1)}h_{2(j_2+1)}\geq \frac{j_1(j_1-1)}{(j_2+1)(j_2+2)}h_{2(j_1-2)}h_{2(j_2+2)}\geq \\ \geq \frac{j_1!}{(j_2+1)\ldots(j_2+j_1)}h_0h_{2(j_2+j_1)} = \frac{j_1!j_2!}{(j_1+j_2)!}h_0h_{2(j_2+j_1)}
\end{multline*}
Next, assume, without loss of generality, that $j_1+j_2\geq j_3$. Then
$$
h_{2j_1} h_{2j_2}h_{2j_3}\geq \frac{j_1!j_2!}{(j_1+j_2)!}h_{2(j_1+j_2)}h_{2j_3} \geq \frac{j_1!j_2!j_3!}{(j_1+j_2+j_3)!}h_{2(j_1+j_2+j_3)}.
$$

%

Proceeding in this way we finally get
$$
h_{2j_1}\cdot\ldots\cdot h_{2j_n}\geq \frac{j_1!j_2!\ldots j_n!}{(j_1+j_2+\ldots+j_n)!}h_{2(j_1+j_2+\ldots+j_n)}=
\frac{j_1!j_2!\ldots j_n!}{k!}h_{2k}
$$
Therefore,
$$
\sum\limits_{j_1+\ldots+j_n=k}\frac{1}{h_{2j_1}\cdot\ldots\cdot h_{2j_n}} \leq \frac{k!}{h_{2k}}\sum\limits_{j_1+\ldots+j_n=k}\frac{1}{j_1!\cdot\ldots\cdot j_n!}= \frac{k!}{h_{2k}}\cdot\frac{n^k}{k!} = \frac{n^k}{h_{2k}}.
$$

The equality $\sum\limits_{j_1+\ldots+j_n=k}\frac{1}{j_1!\cdot\ldots\cdot j_n!} = \frac{n^k}{k!}$ is obtained by comparing the coefficients of
$$
e^{nz} = \sum\limits_{k=0}^{\infty}\frac{n^kz^k}{k!} = \sum\limits_{k=0}^{\infty}\sum\limits_{j_1+\ldots+j_m=k} \frac{z^k}{j_1!\cdot\ldots\cdot j_n!}.
$$
This finishes the proof of \eqref{bb}.

It remains to show the sharpness of the value $r=\frac{1}{\sqrt{n}}$. 
For this we use the standard test function $f(z) = 1+\varepsilon z$ where $\varepsilon$
can be taken arbitrarily small.  
Then the left-hand side of the desired inequality (i.e., $\|f_r\|^{2n}_{A^{2n}(w)}$) 
will be equal to
$$
\frac1{2\pi}\int\limits_0^1\int\limits_0^{2\pi}\big\lvert 1+\varepsilon r\rho e^{i\theta}\big\rvert^{2n}\rho w(\rho)\, d\theta\,d\rho = 1+n^2\varepsilon^2r^2\int\limits_0^1\rho^3w(\rho)\,d\rho+O(\varepsilon^4)
$$
and the right-hand side (i.e., $\|f\|^{2n}_{A^2(w)}$) will be given by
$$
\left(\frac1{2\pi}\int\limits_0^1\int\limits_0^{2\pi}\big\lvert1+\varepsilon \rho e^{i\theta}\big\rvert^{2}\rho w(\rho)\, d\theta\,d\rho\right)^n = 1+n\varepsilon^2 \int\limits_0^1\rho^3w(\rho)\,d\rho+O(\varepsilon^4).
$$
Comparing these values with $\varepsilon\to 0$, we see that if the desired inequality holds then $nr^2\leq 1$, so $r\leq \frac1{\sqrt{n}}$.

Let us show that the theorem applies to all classical weights $w_{\alpha}$. In this case
$$
h_{2n} = \frac{\Gamma(\alpha)\Gamma(n+1)}{\Gamma(\alpha+n)},
$$
whence
$$
\frac{h_{2(n+1)}}{(n+1)h_{2(n-1)}}+\frac n{n+1}\frac{h_{2(n+1)}}{h_{2n}} =\frac{n}{n+\alpha-1}= \frac{h_{2n}}{h_{2(n-1)}}.
$$
Thus, inequality \eqref{cond} turn into equality and, in particular, the weaker condition 
\eqref{weakcond} is satisfied. Theorem~\ref{even} is proved.
\bigskip


\section{Two lemmas}\label{sec3}

\begin{lem}\label{lem1}
	Let $g_n$ be a sequence of real numbers, such that $0< g_n\leq g_{n-1}$ for $n\geq 1$, $g_0=1$ and
	\begin{equation}\label{simpCond}
		\frac{g_{n}}{g_{n-1}}\geq \frac n{n+1}\frac{g_{n+1}}{g_{n}},\qquad n\geq 1.
	\end{equation}
	Then the inequality
	$$
	\sum\limits_{k=0}^{n}\frac{g_{n-k}\cdot g_{k+2}}{((n-k)!)^2(k!)^2(k+1)(k+2)}\leq \sum\limits_{k=0}^{n}\frac{g_{n-k+1}\cdot g_{k+1}}{((n-k)!)^2(k!)^2(n-k+1)(k+1)}.
	$$
	holds for all $n\geq 0$.

\end{lem}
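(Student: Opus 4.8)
The plan is to reindex both sums by the total degree and collapse the whole inequality to a single transparent sum whose sign is controlled by \eqref{simpCond}. First I would put $N=n+2$ and rewrite each side as a sum over pairs $(a,b)$ with $a+b=N$. A short factorial computation, using $((b-2)!)^2(b-1)b=(b-2)!\,b!$ and $((a-1)!)^2 a=(a-1)!\,a!$, turns the left-hand side into $\sum_{a+b=N} g_a g_b/\bigl((a!)^2 (b-2)!\,b!\bigr)$ and the right-hand side into the \emph{symmetric} sum $\sum_{a+b=N} g_a g_b/\bigl((a-1)!\,a!\,(b-1)!\,b!\bigr)$. Both may be extended to all $a,b\ge0$ with the convention $1/m!=0$ for $m<0$, since the extra boundary terms vanish.

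Next I would symmetrize the left-hand side in $a\leftrightarrow b$ and subtract. Putting each coefficient over the common denominator $(a!)^2(b!)^2$, using $\frac1{(a-1)!(b-1)!}=\frac{ab}{a!\,b!}$, and invoking $a+b=N$ together with the identity $ab-\tfrac12\bigl(a(a-1)+b(b-1)\bigr)=\tfrac12\bigl((a+b)-(a-b)^2\bigr)$, the difference collapses to
\[
\mathrm{RHS}-\mathrm{LHS}=\frac12\sum_{a+b=N}\frac{g_a g_b\,\bigl(N-(a-b)^2\bigr)}{(a!)^2(b!)^2}.
\]
Setting $u_a=g_a g_{N-a}/\bigl((a!)^2((N-a)!)^2\bigr)$ and $w_a=N-(2a-N)^2$, the goal becomes $\sum_{a=0}^N u_a w_a\ge0$. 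I expect this telescoping simplification to be the main obstacle: spotting that the messy factorial coefficients combine into the single factor $N-(a-b)^2$ is the crux that makes the statement transparent.

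The rest is a rearrangement argument. From \eqref{simpCond}, exactly as in the proof of Theorem~\ref{even}, one gets for $m\ge n\ge1$ that $g_n g_m\ge \frac{n}{m+1}g_{n-1}g_{m+1}$ (equivalently, $g_k/(k\,g_{k-1})$ is non-increasing). Applying this with $n=a$, $m=N-a$ for $a\le N/2$ yields $u_a/u_{a-1}\ge (N-a+1)/a$, so that $q_a:=u_a/\binom{N}{a}$ satisfies $q_a\ge q_{a-1}$ for $1\le a\le N/2$. Since both $u_a$ and $\binom{N}{a}$ are invariant under $a\mapsto N-a$, the sequence $q_a$ is symmetric and unimodal: it attains its minimum at the two ends and its maximum at the centre.

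Finally I would compare against the binomial baseline. A direct computation of the first two moments of $\binom{N}{a}$ gives $\sum_{a=0}^N\binom{N}{a}(2a-N)^2=N2^N$, hence $\sum_{a=0}^N\binom{N}{a}\bigl(N-(2a-N)^2\bigr)=N2^N-N2^N=0$. Write $b_a=\binom{N}{a}w_a$, so that $\sum_a b_a=0$, with $b_a\ge0$ on the central interval $I=\{a:(2a-N)^2\le N\}$ and $b_a\le0$ off $I$. By unimodality $q_a$ is at least its common boundary value $c$ on $I$ and at most $c$ off $I$; therefore $q_a b_a\ge c\,b_a$ for every $a$, giving $\sum_a u_a w_a=\sum_a q_a b_a\ge c\sum_a b_a=0$, which is precisely $\mathrm{RHS}-\mathrm{LHS}\ge0$. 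Note that the argument uses only \eqref{simpCond}; the monotonicity $g_n\le g_{n-1}$ is not needed.
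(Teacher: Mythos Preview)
Your proof is correct and takes a genuinely different, more transparent route than the paper's.

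The paper argues by an even/odd case split on $n$. In each case it rewrites $T_n:=\mathrm{LHS}-\mathrm{RHS}$ as a boundary term plus a signed sum, introduces an auxiliary sequence $s_k$ defined by a specific recursion (whose closed form is verified by induction), and uses the consequence $g_p g_q \ge \tfrac{p}{q+1}\,g_{p-1}g_{q+1}$ of \eqref{simpCond} repeatedly to telescope $T_n$ down to $s_m$ plus the boundary term, which then cancel. The mechanism works but is somewhat opaque: one must guess the right recursion for $s_k$, and the two parities are handled separately.

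You instead reindex by $a+b=N=n+2$, symmetrize, and collapse the difference to the single expression
\[
\mathrm{RHS}-\mathrm{LHS}=\tfrac12\sum_{a=0}^{N} u_a\bigl(N-(2a-N)^2\bigr),\qquad u_a=\frac{g_a g_{N-a}}{(a!)^2((N-a)!)^2}.
\]
This algebraic identity is the real content; after it the sign follows by a Chebyshev-type rearrangement. Condition \eqref{simpCond} gives $u_a/u_{a-1}\ge \binom{N}{a}\big/\binom{N}{a-1}$ for $1\le a\le\lfloor N/2\rfloor$, so $q_a=u_a/\binom{N}{a}$ is symmetric and unimodal, while the binomial variance identity yields $\sum_a\binom{N}{a}\bigl(N-(2a-N)^2\bigr)=0$. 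Combining these finishes the proof with no case split and no auxiliary sequence. One minor point of phrasing: ``common boundary value $c$'' is slightly loose since $\partial I$ need not hit integers, but taking $c=\min_{a\in I\cap\mathbb{Z}}q_a$ (the value at the innermost integer of $I^c$ would work equally well) makes $q_a b_a\ge c\,b_a$ valid termwise, exactly as you use it. Your closing remark that the monotonicity $g_n\le g_{n-1}$ is not needed is also correct; the paper's own argument does not use it either, despite listing it as a hypothesis.
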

\begin{proof}
	We can write
	\begin{multline*}
		T_n:=\sum\limits_{k=0}^{n}\frac{g_{n-k}\cdot g_{k+2}}{((n-k)!)^2(k!)^2(k+1)(k+2)}-\\-\sum\limits_{k=0}^{n}\frac{g_{n-k+1}\cdot g_{k+1}}{((n-k)!)^2(k!)^2(n-k+1)(k+1)} = \\
		=\frac{g_0\,g_{n+2}}{(n!)^2(n+1)(n+2)}-\frac{g_1\,g_{n+1}}{(n!)^2(n+1)}+
		\sum\limits_{k=0}^{n-1}\frac{g_{n-k}\,g_{k+2}(2k-n+1)}{((n-k)!)^2((k+1)!)^2(k+2)}.		
	\end{multline*}
	\noindent
	{\bf Case 1.} Let $n$ be even, i.e. $n=2m$. Then it can be represented as
	\begin{multline*}
		T_{2m}:=\frac{g_0\,g_{2m+2}}{((2m)!)^2(2m+1)(2m+2)}-\frac{g_{m+1}^2}{\left(m!\right)^2\left(\left(m+1\right)!\right)^2\left(m+1\right)}+\\+
		\sum\limits_{k=0}^{m-1}\frac{g_{m+k+2}\,g_{m-k}}{\left(\left(m+k+2\right)!\right)^2\left(\left(m-k\right)!\right)^2}(-2m+4k^2+8k+2).	
	\end{multline*}
	
	We denote
	$$
	t_k = \frac{g_{m+k+2}\,g_{m-k}}{\left(\left(m+k+2\right)!\right)^2\left(\left(m-k\right)!\right)^2}(-2m+4k^2+8k+2)
	$$
	and introduce the sequence $s_k$ the following way.
	$$
	s_0 = -\frac{g_{m+1}^2}{\left(m!\right)^2\left(\left(m+1\right)!\right)^2\left(m+1\right)},
	$$
	$$
	s_{k} = s_{k-1}\cdot\frac{\left(m-k+2\right)}{\left(m+k+1\right)}\frac{g_{m-k+1}\,g_{m+k+1}}{g_{m-k+2}\,g_{m+k}} +t_{k-1},\qquad 0\leq k\leq m.
	$$
	It is easy to show by induction that
	$$
	s_k = -\frac{(2k+1)g_{m+k+1}\,g_{m-k+1}}{\left(\left(m-k\right)!\right)^2\left(\left(m+k+1\right)!\right)^2\left(m-k+1\right)}.
	$$
	
	Therefore, $s_k<0$. One can note that at each iteration, the previously obtained result is multiplied by some coefficient and added to $t_{k-1}$, starting with the term $s_0$, which participates in $T_{2m}$.
	
	From the condition \eqref{simpCond} one has
	\begin{equation}\label{eqns}
		\frac p{q+1}g_{p-1}\,g_{q+1} \leq	g_{p}\, g_{q}\leq \frac{q}{p+1}g_{p+1}g_{q-1},\qquad p<q.
	\end{equation}
	Hence,
	$$
	g_{m-k+1}\,g_{m+k+1} \leq \frac{\left(m+k+1\right)}{\left(m-k+2\right)}g_{m-k+2}\,g_{m-k}.
	$$
	We conclude that
	$$
	s_k \geq s_{k-1}+t_{k-1}\geq s_{k-2}+t_{k-2}+t_{k-1}\geq\ldots\geq s_0+\sum_{q=0}^{k-1}t_k.
	$$
	Therefore,
	$$
	T_{2m}\leq \frac{g_0\,g_{2m+2}}{((2m)!)^2(2m+1)(2m+2)}+s_{m}.
	$$
	From the property \eqref{eqns}
	$$
	s_{m} = -\frac{(2m+1)g_2\,g_{2m+1}}{((2m+1)!)^2}\leq -\frac {g_0\,g_{2m+2}}{((2m)!)^2(2m+1)(2m+2)}
	$$
	and $T_n=T_{2m}\leq 0$.
	\medskip
	\\
	{\bf Case 2.} Now let $n$ be odd. In this case
	$$
			T_n:=\frac{g_0\,g_{n+2}}{(n!)^2(n+1)(n+2)}+
			\sum\limits_{k=0}^{\frac {n-1}2} \frac{g_{\frac {n+3} 2+k}\,g_{\frac {n+1}2-k}}
		{\left(\left(\frac  {n+3} 2+k\right)!\right)^2\left(\left(\frac  {n+1} 2-k\right)!\right)^2}(-n+4k^2+4k-1).
	$$
	As before, we denote
	$$
	t_k = \frac{g_{\frac {n+3} 2+k}\,g_{\frac {n+1}2-k}}{\left(\left(\frac  {n+3} 2+k\right)!\right)^2
		\left(\left(\frac  {n+1} 2-k\right)!\right)^2}(-n+4k^2+4k-1)
	$$
	and introduce the sequence
	$$
	s_0 = t_0 = -\frac{2g_{\frac {n+3} 2} \, g_{\frac {n+1}2}}{\left(\left(\frac  {n+3} 2\right)!\right)^2\left(\left(\frac  {n-1} 2\right)!\right)^2\left(\frac{n+1}2\right)},
	$$
	$$
	s_k = s_{k-1}\cdot \frac{\left(\frac{n+3}2-k\right)}{\left(\frac{n+3}2+k\right)}\cdot \frac{g_{\frac{n+3}2+k}\,
		g_{\frac{n+1}2-k}}{g_{\frac{n+1}2+k} \, g_{\frac{n+3}2-k}}+t_k.
	$$
	It can be shown by induction that
	$$
	s_k = -\frac{2(k+1)g_{\frac{n+3}2+k}\, g_{\frac{n+1}2-k}}{\left(\left(\frac  {n+3} 2+k\right)!\right)^2\left(\left(\frac  {n-1} 2-k\right)!\right)^2\left(\frac{n+1}2-k\right)}.
	$$
	Using the same considerations as for the even case, we get
	$$
	T_n\leq \frac{g_0\,g_{n+2}}{(n!)^2(n+1)(n+2)}+s_{\frac{n-1}2}.
	$$
	The property \eqref{eqns} gives
	$$
	s_{\frac{n-1}2}=-\frac{(n+1)g_{n+1}\,g_1}{((n+1)!)^2}\leq -\frac {g_0\,g_{n+2}}{(n!)^2(n+1)(n+2)}.
	$$
	Therefore, $T_n\leq 0$ for both cases. This completes the proof.
\end{proof}

\begin{lem}\label{lem2}
	If the Bergman weight  $w$ satisfies inequality \eqref{cond} for all $n\geq 1$, then
	\begin{equation}\label{ind}
		\frac{h_{2(n+1)}}{n+1}\leq h_{2n}\ln\left(\sum\limits_{k=0}^{\infty}\frac{h_{2k}}{(k!)^2}\right),\qquad n\geq 1.
	\end{equation}
\end{lem}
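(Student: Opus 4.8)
The plan is to recast the hypothesis \eqref{cond} as a clean recursion for the normalized moment ratios and then reduce the whole statement to a single base case. Introduce
$$
b_m = \frac{h_{2(m+1)}}{(m+1)\,h_{2m}}, \qquad m \geq 0,
$$
so that, writing $S = \sum_{k=0}^{\infty} h_{2k}/(k!)^2$, the target inequality \eqref{ind} is precisely $b_n \leq \ln S$ for every $n \geq 1$. Since $h_{2m} = m\,b_{m-1}\,h_{2(m-1)}$ and $h_{2(m+1)} = (m+1)\,b_m\,h_{2m}$, each of the three moment ratios in \eqref{cond} can be rewritten purely in terms of $b_{m-1}$ and $b_m$: the left side becomes $m\,b_{m-1}$, while the right side becomes $m\,b_m b_{m-1} + m\,b_m$. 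After cancelling the common factor $m$, I expect \eqref{cond} to collapse to
$$
b_m \leq \frac{b_{m-1}}{b_{m-1}+1}, \qquad m \geq 1.
$$

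Two observations then finish the argument. First, because $x/(x+1) \leq x$ for all $x \geq 0$, the sequence $\{b_m\}$ is nonincreasing, so it suffices to establish $b_n \leq \ln S$ at the single index $n = 1$; every larger $n$ follows from $b_n \leq b_1$. Second, estimating $S$ by its first two (nonnegative) terms gives $S \geq 1 + h_2 = 1 + b_0$, using $h_0 = 1$. I would then chain the recursion at $m = 1$ with the elementary estimate $\ln(1+x) \geq x/(1+x)$, valid for $x > -1$, applied at $x = b_0$:
$$
\ln S \geq \ln(1 + b_0) \geq \frac{b_0}{b_0 + 1} \geq b_1 \geq b_n, \qquad n \geq 1,
$$
which is exactly \eqref{ind}.

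The only step demanding genuine care is the algebraic reduction of \eqref{cond} to $b_m \leq b_{m-1}/(b_{m-1}+1)$; once that identity is in hand, the remainder is a one-line monotonicity remark together with a single convexity inequality for the logarithm. I do not foresee any real obstacle: positivity of the moments $h_{2k}$ for a genuine Bergman weight guarantees that all the $b_m$ are positive and that every division above is legitimate.
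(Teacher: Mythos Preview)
Your argument is correct and is considerably shorter than the paper's. Both proofs share the same overall architecture: one first establishes the base case $b_1\le\ln S$ (equivalently $h_4/2\le h_2\ln S$) and then propagates to all $n\ge 1$ via the monotonicity $b_n\le b_{n-1}$, which the paper deduces from the weaker inequality~\eqref{weakcond}. The genuine difference lies in the base case. The paper bounds $S$ below by the full Bessel-type series $\sum_k h_2^k/(k!)^2=I_0(2\sqrt{h_2})$ (using $h_2^k\le h_{2k}$ from H\"older), replaces $h_4$ by $2h_2^2/(h_2+1)$ via~\eqref{cond} at $m=1$, and then verifies the resulting one-variable inequality through a chain of Bessel identities and derivative computations. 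You instead keep only the first two terms of $S$, and exploit the full strength of~\eqref{cond} at $m=1$ in the exact form $b_1\le b_0/(b_0+1)$; the elementary bound $\ln(1+x)\ge x/(1+x)$ then closes the gap in one line. What your reformulation buys is that the algebraic content of~\eqref{cond} and the analytic content of the logarithm match perfectly, so no Bessel machinery is needed; the paper's route, by contrast, uses a sharper lower bound on $S$ but pays for it with a substantially longer verification.
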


\begin{proof}
	Let $n=1$.
	From the H\"older inequality, we get the estimate	
	$$
	h_2^n = \left(\int\limits_0^1\rho^3w(\rho)\,d\rho\right)^n \leq \int\limits_0^1 \rho^{2n+1}w(\rho)\,d\rho\cdot\left(\int\limits_0^1\rho w(\rho)\,d\rho\right)^{n-1}= h_{2n}
	$$
	(recall that $h_0=1$). 
	It also follows from condition \eqref{cond} with $m=1$, that 
	\begin{equation}
		\label{h2h4}
		h_4 \le \frac{2h_2^2}{h_2+1}. 
	\end{equation} 
	Hence,
	\begin{multline*}
		2h_{2}\ln\left(\sum\limits_{k=0}^{\infty}\frac{h_{2k}}{(k!)^2}\right)-h_4\geq 2h_{2}\ln\left(\sum\limits_{k=0}^{\infty}\frac{h_{2}^k}{(k!)^2}\right)-\frac{2h_2^2}{h_2+1}\ge \\  \ge
		2h_2\ln I_0(2\sqrt{h_2})-2h_2^2\left(1-\frac{h_2}4\right). 
	\end{multline*}
	Here $I_0$ is the modified Bessel function. Recall that the modified Bessel functions $I_n$,
	defined as
	$$
	I_n(z) = \sum\limits_{m=0}^{\infty}\frac{(z/2)^{2m+n}}{m!(m+n)!}, 
	$$
	are connected with the classical Bessel functions 
	$J_n$ by the equality $I_n(z) =e^{-\frac{in\pi}2}J_n(ze^{\frac{i\pi}2})$.
	
	In what follows we will use the following three properties of the functions $I_n$
	which can be found in \cite[\S 2.12]{wat}:
	\begin{equation}\label{bess1}
		I_n'(x) = I_{n+1}(x)+\frac{n}{x}I_n(x),
	\end{equation}
	\begin{equation}\label{bess2}
		\frac{2n}x I_n(x) = I_{n-1}(x)-I_{n+1}(x),
	\end{equation}
	\begin{equation}\label{bess3}
		2 I_n'(x) = I_{n-1}(x)+I_{n+1}(x).
	\end{equation}
	
	We denote $2\sqrt{h_2}=t$ and consider the function
	$$
	u_1(t) = 16\ln I_0(t)-4t^2+\frac{t^4}4.
	$$
	Since $0<h_2 \le 1$, our goal is to show that $u_1(t)\geq 0$ for $0\le t \le 2$. 
	By the property \eqref{bess1} for $n=0$, we have
	$$
	u_1'(t) = \frac{16I_1(t)}{I_0(t)}-8t+t^3.
	$$
	Since $I_0(t) >0$ for $t\ge 0$, we need to consider
	$$
	16I_1(t)-8tI_0(t)+t^3I_0(t).
	$$
	Using the property \eqref{bess2} with $n=1$, we get the expression
	$$
	t^3I_0(t)-8tI_2(t)=:tu_2(t)
	$$
	We take the derivative of $u_2(t)$ and use the properties \eqref{bess1}--\eqref{bess3} to obtain that
	\begin{multline*}
		u_2'(t) = 2tI_0(t)+t^2I_1(t)-4(I_1(t)+I_3(t))\geq  2tI_0(t)-4I_1(t)-4I_3(t) = \\ = 2tI_2(t)-4I_3(t)=2tI_2(t)-\frac23tI_2(t)+\frac23tI_4(t)=\frac43tI_2(t)+\frac23tI_4(t).
	\end{multline*}
	Since $I_n(x) >0$, $x>0$, it follows that
	$$
	u_2(t)\geq u_2(0) = 0
	$$
	and so $u_1$ also increases. Then
	$$
	u_1(t) \geq u_1(0) = 0.
	$$
	
	Thus, we proved that
	$$
	\frac{h_4}2\leq h_2\ln\left(\sum\limits_{k=0}^{\infty}\frac{h_{2k}}{(k!)^2}\right).
	$$
	Let us suppose that the inequality \eqref{ind} is proved for $n-1$. Obviously, \eqref{cond} implies
	$$
	\frac{h_{2n}}{h_{2(n-1)}}\geq \frac n{n+1}\frac{h_{2(n+1)}}{h_{2n}},\qquad n\geq 1.
	$$
	Hence,
	$$
	\frac{h_{2(n+1)}}{n+1} \leq \frac{h_{2n}^2}{nh_{2(n-1)}}\leq\frac{h_{2n}}{h_{2(n-1)}}\cdot h_{2(n-1)}\ln\left(\sum\limits_{k=0}^{\infty}\frac{h_{2k}}{(k!)^2}\right),
	$$
	which is exactly the desired inequality.
\end{proof}	
\bigskip


\section{Proofs of Theorems~\ref{mainth} and \ref{counter}}\label{sec4}

\begin{proof}[Proof of Theorem~\ref{mainth}]
	We consider the function
	$$
	\phi(q) = \ln\left(\sum\limits_{n=0}^{\infty}\frac{q^n}{(n!)^2}h_{2n}\right) - q \ln\left(\sum\limits_{n=0}^{\infty}\frac{h_{2n}}{(n!)^2}\right).
	$$
	Its derivative is
	$$
	\phi'(q) = \frac{\sum\limits_{n=0}^{\infty}\frac{q^n}{(n!)^2(n+1)}h_{2(n+1)}}{\sum\limits_{n=0}^{\infty}\frac{q^n}{(n!)^2}h_{2n}} - \ln\left(\sum\limits_{n=0}^{\infty}\frac{h_{2n}}{(n!)^2}\right).
	$$
	We also take the second derivative
	$$
	\phi''(q) = \frac{\sum\limits_{n=0}^{\infty}\frac{q^n}{(n!)^2(n+1)(n+2)}h_{2(n+2)}\cdot \sum\limits_{n=0}^{\infty}\frac{q^n}{(n!)^2}h_{2n}-\left(\sum\limits_{n=0}^{\infty}\frac{q^n}{(n!)^2(n+1)}h_{2(n+1)}\right)^2}{\left(\sum\limits_{n=0}^{\infty}\frac{q^n}{(n!)^2}h_{2n}\right)^2}.
	$$
	Multiplying the series, we see that the first term in the numerator is
	$$
	\sum\limits_{n=0}^{\infty}\left(\sum\limits_{k=0}^{n}\frac{h_{2(n-k)}\cdot h_{2(k+2)}}{((n-k)!)^2(k!)^2(k+1)(k+2)}\right) q^n,
	$$
	and the second is equal to
	$$
	\sum\limits_{n=0}^{\infty}\left(\sum\limits_{k=0}^{n}\frac{h_{2(n-k+1)}\cdot h_{2(k+1)}}{((n-k)!)^2(k!)^2(n-k+1)(k+1)}\right) q^n.
	$$
	Since \eqref{cond} implies \eqref{simpCond} for $g_n=h_{2n}$, we can apply Lemma~\ref{lem1} and see that for all $n\geq 0$
	$$
	\sum\limits_{k=0}^{n}\frac{h_{2(n-k)}\cdot h_{2(k+2)}}{((n-k)!)^2(k!)^2(k+1)(k+2)}\leq \sum\limits_{k=0}^{n}\frac{h_{2(n-k+1)}\cdot h_{2(k+1)}}{((n-k)!)^2(k!)^2(n-k+1)(k+1)}.
	$$
	It follows that $\phi''(q)\leq 0$. Therefore, $\phi'(q)\leq \phi'(1)$ when $q\geq 1$. We denote
	$$
	\psi = \sum\limits_{n=0}^{\infty}\frac{h_{2(n+1)}}{(n!)^2(n+1)}-\sum\limits_{n=0}^{\infty}\frac{h_{2n}}{(n!)^2} \cdot \ln\left(\sum\limits_{n=0}^{\infty}\frac{h_{2n}}{(n!)^2}\right).
	$$
	From Lemma~\ref{lem2} we know that
	$$	
	\frac{h_{2(n+1)}}{n+1}\leq h_{2n}\ln\left(\sum\limits_{k=0}^{\infty}\frac{h_{2k}}{(k!)^2}\right),\qquad n\geq 1.
	$$
	We proved that all the terms of the series
	$$
	\psi = \sum\limits_{n=0}^{\infty}\frac1{(n!)^2}\left(\frac{h_{2(n+1)}}{n+1}- h_{2n}\ln\left(\sum\limits_{k=0}^{\infty}\frac{h_{2k}}{(k!)^2}\right)\right),
	$$
	are negative except the case $n=0$. To get rid of this exceptional term, we add it to the term with $n=1$.
	\begin{multline*}
		h_2-\ln\left(\sum\limits_{k=0}^{\infty}\frac{h_{2k}}{(k!)^2}\right)+\frac{h_4}2-h_2\ln\left(\sum\limits_{k=0}^{\infty}\frac{h_{2k}}{(k!)^2}\right) = \\=
		h_2+\frac{h_4}2-(1+h_2)\ln\left(\sum\limits_{k=0}^{\infty}\frac{h_{2k}}{(k!)^2}\right)\leq \\ \leq h_2+\frac{h_4}2-(1+h_2)\ln\left(1+h_2+\frac{h_4}4\right)=:y(h_2,h_4).
	\end{multline*}
	The derivative of this expression with respect to $h_4$
	$$
	\frac{\partial y}{\partial h_4} = \frac{2+2h_2+h_4}{8+8h_2+2h_4}>0,
	$$
	so $y$ increases with respect to $h_4$. Recall that we have  $h_4 \le \frac{2h_2^2}{1+h_2}$
	(see \eqref{h2h4}). Therefore, it is sufficient to substitute this value into $y$: 
	$$
	y\left(h_2,\frac{2h_2^2}{1+h_2}\right)=h_2 + \frac{h_2^2}{1+h_2} - (1 + h_2) \ln\left(1 + h_2 + \frac{h_2^2}{2+2h_2}\right).
	$$
	The function $v(h_2) = y\left(h_2,\frac{2h_2^2}{1+h_2}\right)/(1+h_2)$ has the same sign as the expression $y\left(h_2,\frac{2h_2^2}{1+h_2}\right)$. Calculations show that its derivative equals to
	$$
	v'(h_2) = -\frac{h_2^2 (2 + 3 h_2 + 3 h_2^2)}{(1 +h_2)^3 (2 + 4h_2 + 3 h_2^2)} \leq 0.
	$$
	Then
	$$
	y\left(h_2,\frac{2h_2^2}{1+h_2}\right)\leq y(0,0) = 0
	$$
	for all $h_2\in(0,1)$. Thus, the term with $n=0$ does not affect the sign and $\psi<0$.
	
	We proved that $\phi'(q)<0$, and so
	$$
	\phi(q)\leq \phi(1) = \ln\left(\sum\limits_{n=0}^{\infty}\frac{h_{2n}}{(n!)^2}\right) -  \ln\left(\sum\limits_{n=0}^{\infty}\frac{h_{2n}}{(n!)^2}\right)=0.
	$$
	Since logarithm is an increasing function, we finally obtain the desired inequality
	$$
	\sum\limits_{n=0}^{\infty}\frac{q^n}{(n!)^2}h_{2n}\leq \left(\sum\limits_{n=0}^{\infty}\frac{h_{2n}}{(n!)^2}\right)^q.
	$$
\end{proof}

\begin{proof}[Proof of Theorem~\ref{counter}]
	Consider the weight
	$$
	w^*(\rho) = \left\{\begin{aligned}
		\frac{3}{2\rho},\qquad 0\leq \rho\leq \frac12,\\
		\frac{1}{2\rho},\qquad \frac12<\rho\leq 1.
	\end{aligned}
	\right.
	$$
	Then $h_n=\frac{1 + 2^{-n}}{2 (1 + n)}$. One can note that $h_0=1$, so this weight is admissble. It is not
	continuous but we always can approximate it with a continuous or even smooth monotonically decreasing 
	weight with arbitrarily close moments $h_n$.
	
	However, one can check numerically that the derivative of the function
	$$
	\psi(q) = \sum\limits_{n=0}^{\infty}\frac{q^n}{(n!)^2}h_{2n}- \left(\sum\limits_{n=0}^{\infty}\frac{h_{2n}}{(n!)^2}\right)^q
	$$	
	at $q=1$ is positive ($\approx0.0048$). So, the function $\psi$ is equal to zero when $q=1$ and then increases. 
	It means that this value will be positive at least on some interval $(1,1+\varepsilon)$, $\varepsilon>0$,
	and \eqref{bern} does not hold.
	Also numerical calculations show that $\psi(2) \approx 0.0105 >0$.
	\end{proof}
	
%



\begin{rem}
	{\rm As we have shown at the end of Section 2, for the classical weights $w_{\alpha}$ inequality \eqref{cond}
		turns into equality. Let us show that all power weights $w(\rho) = (m+2)\rho^m$, $m\geq 0$ (not necessarily integer), 
		also satisfy \eqref{cond}. Indeed, we have $h_{2n} = \frac{2 + m}{2 + m + 2 n}$ and
		$$
			\frac{h_{2n}}{h_{2(n-1)}}-\left(\frac{h_{2(n+1)}}{(n+1)h_{2(n-1)}}+\frac n{n+1}\frac{h_{2(n+1)}}{h_{2n}}\right) 
			=  \frac{2 m}{(1 + n) (2 + m + 2 n) (4 + m + 2 n)} \ge 0.
	   $$}
\end{rem}

%

\section*{Funding}
The results of Sections~\ref{sec3} and \ref{sec4} (Theorem~\ref{mainth}) were obtained with the support of
Russian Science Foundation grant 22-71-10094. Other results of the paper were 
obtained with the support of Ministry of Science and
Higher Education of the Russian Federation, agreement No 075-15-2021-602.



\end{document}